\begin{document}

\title{Global existence of ideal invicid compressible and heat conductive fluids with radial symmetry.}

\author{Peng Lu\thanks{School of Mathematics Science, Fudan University, Shanghai, P. R. China (plu17@fudan.edu.cn).}
\and Yi Zhou\thanks{School of Mathematics Science, Fudan University, Shanghai, P. R. China (yizhou@fudan.edu.cn).}}

\pagestyle{myheadings} \markboth{Global existence of ideal invicid compressible and heat conductive fluids with radial symmetry.}{Global existence of ideal invicid compressible and heat conductive fluids with radial symmetry.}
\maketitle

\begin{abstract}
In this paper, we study the global existence of classical solutions to the three dimensional ideal invicid compressible and heat conductive fluids with radial symmetrical data in $H^s(\mathbb{R}^3)$. Our proof is based on the symmetric hyperbolic structure of the system.
\end{abstract}

\section{Introduction}\label{intro}

The motion for a compressible viscous, heat-conductive, isotropic Newtonian fluid is described by the system of equations
\begin{equation}\label{1.1}
\left\{
\begin{aligned}
& \rho_t+\nabla\cdot(\rho\bm{u})=0 \\
& \rho\bm{u}_t+\rho\bm{u}\cdot\nabla\bm{u}+\nabla p-\Big(\mu'-\frac{2}{3}\mu\Big)\nabla(\nabla\cdot\bm{u})-\nabla\cdot\big(\mu(\nabla\bm{u}+(\nabla\bm{u})^T)\big)=0 \\
& \Big(\rho\Big(\frac{|\bm{u}|^2}{2}+e\Big)\Big)_t+\nabla\cdot\Big(\rho\bm{u}\Big(\frac{|\bm{u}|^2}{2}+e\Big)+p\bm{u}\Big)-\nabla\cdot\Big(\mu(\nabla\bm{u}+(\nabla\bm{u})^T)\bm{u} \\
& +\Big(\mu'-\frac{2}{3}\mu\Big)\bm{u}(\nabla\cdot\bm{u})\Big)=\nabla\cdot(\kappa\nabla T),
\end{aligned}
\right.
\end{equation}

where $t\geqslant 0$, $x=(x_1,x_2,x_3)\in\mathbb{R}^3$. $\rho>0$ denotes the density, $\bm{u}=(u_1,u_2,u_3)$ the fluid velocity, $T>0$ the absolute temperature, $e$ denotes the internal energy, and $p$ denotes the pressure. And the positive constants $\mu,\mu'$satisfying
$$\mu>0,\quad \mu'+\frac{2\mu}{3}\geqslant 0$$
describe the viscosity.

There are very rich results about compressible Navier-Stokes system, such as small classical solutions with finite energy by Matsumura-Nishida \cite{Matsumura-Nishida}, see also Huang-Li \cite{Huang-Li} about the case of vacuum, weak, finite-energy solutions by Lions \cite{Lions}, variational solutions by Feireisl \cite{Feireisl} and Feireisl-Novotn$\acute{y}$-Petzeltov$\acute{a}$ \cite{Feireisl-Novotny-Petzeltov} , solutions in Besov spaces with the interpolation index one by Chikami-Danchin \cite{Chikami-Danchin}, Danchin \cite{Danchin}, self-similar solutions by Guo-Jiang \cite{Guo-Jiang}, Li-Chen-Xie \cite{Li-Chen-Xie} (density-dependent viscosity) and Germain-Iwabuchi \cite{Germain-Iwabuchi}.

There are also some literature related to the vacuum. Xin \cite{Xin} proved the non-existence of smooth solutions for the initial density with the compact support. Hoff and Smoller \cite{Hoff-Smoller} considered 1-D barotropic Navier-Stokes equations and showed that the persistency of the almost everywhere positivity of the density can prevent the formulation of vacuum state. Jang and Masmoudi \cite{Jang-Masmoudi} obtained local solutions of the 3D compressible Euler equations under the barotropic condition with a physical vacuum, see also \cite{Jang-Masmoudi2} about problems of vacuum state. Recently, Lai, Liu and Tarfulea \cite{Lai-Liu-Tarfulea} studied the derivation of some non-isothermal hydrodynamic models (including non-isothermal ideal gas) and established the corresponding maximum principle.

In the classical paper of Matsumura-Nishida \cite{Matsumura-Nishida}, they proved the global existence of classical solutions with small data of $O(\varepsilon)$ in $H^s$, where $\varepsilon$ depends on $\mu$, $\mu'$ and $\kappa$. The main purpose of this paper is to improve the result of \cite{Matsumura-Nishida} in radial symmetry case. In this case we can take the small constant $\varepsilon$ independent of $\mu$ and $\mu'$, and only depends on $\kappa$. More precisely, we can set $\mu=\mu'=0$ and \eqref{1.1} will thus reduce to the following system
\begin{equation}\label{Heat-Hydro}
\left\{
\begin{aligned}
& \rho_t+\nabla\cdot(\rho\bm{u})=0 \\
& (\rho\bm{u})_t+\nabla\cdot(\rho\bm{u}\otimes\bm{u})+\nabla p=0 \\
& \left(\rho\left(\frac{1}{2}|\bm{u}|^2+e\right)\right)_t+\nabla\cdot\left(\rho\bm{u}\left(\frac{1}{2}|\bm{u}|^2+e\right)+p\bm{u}\right)=\nabla\cdot(\kappa\nabla T),
\end{aligned}
\right.
\end{equation}

We assume the following conditions on \eqref{Heat-Hydro}:
\begin{itemize}
\item[1.] The gas is ideal : $p=RT\rho$, where $R$ is a positive constant;
\item[2.] The gas is polytropic : $e=c_VT$, where $c_V$ is a positive constant which denotes the specific heat at constant volume.
\end{itemize}

Assume that the positive constants $R, ~ c_V, ~ \kappa=1$, then the system \eqref{Heat-Hydro} can be written in the following form
\begin{equation}\label{Heat-Hydro-2}
\left\{
\begin{aligned}
& \rho_t+\nabla\cdot(\rho\bm{u})=0 \\
& \bm{u}_t+\bm{u}\cdot\nabla\bm{u}+\frac{1}{\rho}\nabla(\rho T)=0 \\
& T_t+\bm{u}\cdot\nabla T+T(\nabla\cdot\bm{u})=\frac{\Delta T}{\rho}.
\end{aligned}
\right.
\end{equation}

Suppose that the initial data
\begin{equation}\label{data small}
\rho(0,x)=1+a_0(r),\quad T(0,x)=1+\theta_0(r),\quad\bm{u}(0,x)=\bm{u}_0(r)=u_0(r)\bm{\omega}
\end{equation}

satisfy
$$\|a_0\|_{H^s}^2+\|\bm{u}_0\|_{H^s}^2+\|\theta_0\|_{H^s}^2\leqslant\varepsilon^2,$$

where $s>5$ is an integer, $r=|x|$ and $\bm{\omega}=\frac{x}{|x|}$, and $\varepsilon>0$ is a small constant.

By the uniqueness of classical solutions, the solutions must have the following form
$$\rho=1+a(t,r),\quad T=1+\theta(t,r),\quad \bm{u}=u(t,r)\bm{\omega},$$

as a result, we obtain
$$\nabla\times\bm{u}\equiv 0.$$

So we may consider the follow system.
\begin{equation}\label{Heat-Hydro-3}
\left\{
\begin{aligned}
& a_t+\bm{u}\cdot\nabla a+(1+a)(\nabla\cdot\bm{u})=0 \\
& \bm{u}_t+\bm{u}\cdot\nabla\bm{u}+\nabla\theta+\frac{1+\theta}{1+a}\nabla a=0 \\
& \theta_t+\bm{u}\cdot\nabla\theta+(1+\theta)(\nabla\cdot\bm{u})=\frac{\Delta\theta}{1+a}
\end{aligned}
\right.
\end{equation}
with the condition
\begin{equation}\label{irrotational}
\nabla\times\bm{u}\equiv 0.
\end{equation}

Our main result can be stated as follows.
\begin{theorem}\label{main}
Consider the Cauchy problem of the three dimensional system \eqref{Heat-Hydro-2}-\eqref{irrotational} ( or \eqref{Heat-Hydro-3}-\eqref{irrotational}) with data \eqref{data small}. Then there exists a constant $\varepsilon_0>0$ such that for $\forall ~ \varepsilon<\varepsilon_0$, the system \eqref{Heat-Hydro-2}-\eqref{irrotational} ( or \eqref{Heat-Hydro-3}-\eqref{irrotational}) admits a global solution
$$(a,\bm{u})\in L^\infty(\mathbb{R}_+;H^s(\mathbb{R}^3))\cap L^2(\mathbb{R}_+;H^s(\mathbb{R}^3)),$$

and
$$\theta\in L^\infty(\mathbb{R}_+;H^s(\mathbb{R}^3))\cap L^2(\mathbb{R}_+;H^{s+1}(\mathbb{R}^3)).$$
\end{theorem}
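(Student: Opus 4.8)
The plan is to prove Theorem \ref{main} via the standard three–step program for symmetric hyperbolic–parabolic systems: a local existence theorem in $H^s$, uniform–in–time a priori energy estimates that exploit the dissipative structure, and a continuation argument that upgrades the local solution to a global one once the data are small. The first thing I would record is that \eqref{Heat-Hydro-3} symmetrizes: multiplying the continuity equation by $\frac{1+\theta}{(1+a)^2}$, keeping the momentum equation as is, and multiplying the temperature equation by $\frac{1}{1+\theta}$ turns the system into $A^0(V)V_t+\sum_j A^j(V)\partial_j V=B\,\Delta V+\text{l.o.t.}$ with $V=(a,\bm u,\theta)$, where $A^0=\mathrm{diag}\big(\tfrac{1+\theta}{(1+a)^2},1,1,1,\tfrac{1}{1+\theta}\big)$ is symmetric positive definite and close to the identity for small data, each $A^j$ is symmetric, and the only second–order term $\frac{\Delta\theta}{(1+a)(1+\theta)}$ acts on the $\theta$–component alone. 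Local existence and uniqueness of $(a,\bm u,\theta)\in C([0,T];H^s)$ with $\theta\in L^2(0,T;H^{s+1})$, together with the propagation of the radial/irrotational structure \eqref{irrotational}, then follow from the classical theory for such systems, so the whole issue is the global bound.

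For the a priori estimate I would run two coupled identities. First the basic symmetric energy estimate: applying $\partial^\alpha$ for $|\alpha|\le s$, pairing with $A^0\partial^\alpha V$, using the symmetry of the $A^j$ (so the first–order terms reduce to commutators) and integrating by parts in the dissipative term gives
\[
\frac{d}{dt}\mathcal E_s(t)+c_0\|\nabla\theta\|_{H^s}^2\le C\sqrt{\mathcal E_s}\,\big(\mathcal E_s+\|\nabla\theta\|_{H^s}^2\big),
\]
with $\mathcal E_s(t)\simeq\|a\|_{H^s}^2+\|\bm u\|_{H^s}^2+\|\theta\|_{H^s}^2$; the right–hand side is cubic because every error carries an extra factor of $a,\bm u,\theta$ or a derivative, handled by Moser and commutator inequalities in $H^s$ (the choice $s>5$ supplies the needed embeddings). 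This dissipates only $\theta$. To recover dissipation for the non–dissipative modes $a$ and $\bm u$, I would add the interaction functional
\[
\mathcal K_s(t)=\sum_{|\alpha|\le s-1}\int_{\mathbb R^3}\partial^\alpha\bm u\cdot\nabla\,\partial^\alpha\big(a-2\theta\big)\,dx .
\]
Differentiating in time and substituting the equations, the leading quadratic part of $\frac{d}{dt}\mathcal K_s$ is $-\|\nabla a\|_{H^{s-1}}^2-\|\nabla\cdot\bm u\|_{H^{s-1}}^2+(\text{positive multiple of }\|\nabla\theta\|^2)+\text{cubic}$: the piece $\int\bm u\cdot\nabla a$ yields $-\|\nabla a\|^2+\|\nabla\cdot\bm u\|^2$, while $-2\int\bm u\cdot\nabla\theta$ adds $-2\|\nabla\cdot\bm u\|^2$, so the two velocity contributions combine to a genuinely negative $-\|\nabla\cdot\bm u\|^2$ at the cost of a favorable–sign multiple of $\|\nabla\theta\|^2$ absorbed by the reservoir in the basic estimate. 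Here the hypothesis \eqref{irrotational} is essential: since $\nabla\times\bm u\equiv0$ one has $\|\nabla\bm u\|^2=\|\nabla\cdot\bm u\|^2$, so divergence control upgrades to control of the full velocity gradient.

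Forming $\mathcal F_s=\lambda\mathcal E_s+\mathcal K_s$ with $\lambda$ large and then $\varepsilon$ small, the anti–dissipative $\theta$–contribution and all cubic errors are absorbed, and since $|\mathcal K_s|\lesssim\mathcal E_s$ one has $\mathcal F_s\simeq\mathcal E_s$, yielding
\[
\frac{d}{dt}\mathcal F_s(t)+c\,\mathcal D_s(t)\le0,\qquad \mathcal D_s=\|\nabla a\|_{H^{s-1}}^2+\|\nabla\bm u\|_{H^{s-1}}^2+\|\nabla\theta\|_{H^s}^2 .
\]
This is the heart of the matter and the place I expect the real work: designing the interaction functional so that the thermal dissipation alone feeds dissipation into the density and velocity modes (the Shizuta–Kawashima mechanism), and verifying that every quasilinear and commutator error is truly cubic. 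Finally I would close by continuation/bootstrap: with $N(T)^2=\sup_{[0,T]}\mathcal E_s+\int_0^T\mathcal D_s\,dt$, the inequality gives $N(T)^2\le C_0\varepsilon^2+C_1N(T)^3$, so for $\varepsilon<\varepsilon_0$ the local solution obeys $N(T)\le 2\sqrt{C_0}\,\varepsilon$ uniformly in $T$; since the local existence time depends only on $\|V\|_{H^s}$, this extends the solution to all $t\ge0$. Combined with the integrated dissipation $\int_0^\infty\mathcal D_s\,dt<\infty$ and the time–integrability of the low–frequency component of $(a,\bm u)$ in three space dimensions, this delivers the stated regularity $(a,\bm u)\in L^\infty(\mathbb R_+;H^s)\cap L^2(\mathbb R_+;H^s)$ and $\theta\in L^\infty(\mathbb R_+;H^s)\cap L^2(\mathbb R_+;H^{s+1})$.
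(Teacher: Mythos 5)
Your proposal follows essentially the same route as the paper: a symmetric--hyperbolic energy estimate whose only dissipation is in $\theta$, a cross--interaction functional of the form $\int\partial^\alpha\bm{u}\cdot\nabla\partial^\alpha(a-c\,\theta)\,dx$ that transfers the thermal dissipation to $\nabla a$ and $\nabla\cdot\bm{u}$, the irrotationality condition \eqref{irrotational} with Hodge decomposition to upgrade control of $\nabla\cdot\bm{u}$ to the full gradient, and a smallness bootstrap to conclude. The paper implements the identical mechanism in time-integrated form --- its pairings with $\big(-\partial^\beta(\nabla\cdot\bm{u}),\nabla\partial^\beta a\big)$ and $\big(\bm{0},-\nabla\partial^\beta\theta\big)$ are precisely the time derivatives of your functional with $a-\theta$ in place of $a-2\theta$, untangled by a two-step substitution, and it handles the zeroth-order estimate via entropy and the conservation laws rather than the symmetric estimate at order zero --- so the differences are organizational rather than substantive.
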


As Thm \ref{main} shows, heat conduction effect alone can prevent the formation of shock despite the lack of viscosity.

\begin{remark}
It's clear that the solution of \eqref{Heat-Hydro} have the following conservation laws
$$\frac{d}{dt}\int a dx\equiv 0, \qquad \frac{d}{dt}\int (a+1)\bm{u}dx\equiv 0,$$

and
$$\frac{d}{dt}\int\left(\frac{|\bm{u}|^2}{2}+\frac{a|\bm{u}|^2}{2}+a\theta+\theta\right)dx\equiv 0.$$
\end{remark}

We set
$$
\begin{aligned}
E_{k,1}(t) & \triangleq\sum\limits_{|\alpha|\leqslant k}\sup\limits_{\tau\in[0,t]}\big(\|\partial^\alpha a(\tau)\|_{L^2}^2+\|\partial^\alpha\bm{u}(\tau)\|_{L^2}^2+\|\partial^\alpha \theta(\tau)\|_{L^2}^2\big) \\
& +\sum\limits_{|\alpha|\leqslant k}\int_0^t\|\nabla\partial^\alpha\theta(\tau)\|_{L^2}^2d\tau
\end{aligned}
$$
for $0\leqslant k\leqslant s$, and

$$E_{k,2}(t)\triangleq\sum\limits_{|\alpha|\leqslant k-1}\int_0^t\left(\|\nabla\partial^\alpha a(\tau)\|_{L^2}^2+\|\nabla\partial^\alpha\bm{u}(\tau)\|_{L^2}^2\right)d\tau$$
for $1\leqslant k\leqslant s$, where
$$\partial=(\partial_t,\partial_{x_1},\partial_{x_2},\partial_{x_3}).$$

According to \eqref{data small}, it's clear that $\exists ~ M>0$ such that
$$E_{s,1}(0)+E_{s,2}(0)\leqslant M^2\varepsilon^2.$$

Due to the local existence result, there exists a positive time $t_*\leqslant +\infty$ such that
\begin{equation}\label{energy small assumption}
t_*=\max\big\{t\geqslant 0 ~ \big| ~ E_{s,1}(\tau)+E_{s,2}(\tau)\leqslant\varepsilon, ~ \forall ~ \tau\in[0,t_*)\big\}.
\end{equation}

We have the following lemma.
\begin{lemma}\label{entropy S lemma}
Let
\begin{equation}\label{entropy S}
S=\ln\left(\frac{T}{\rho}\right)=\ln\left(\frac{1+\theta}{1+a}\right)
\end{equation}
denotes the entropy of unit mass, then the entropy of the system increases.
\end{lemma}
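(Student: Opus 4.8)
The plan is to derive a transport equation for $S$ directly from \eqref{Heat-Hydro-2} and then integrate in space. Writing $D_t=\partial_t+\bm u\cdot\nabla$ for the material derivative and using that $\rho=1+a>0$ and $T=1+\theta>0$ stay positive in the small-data regime \eqref{energy small assumption}, I would first rewrite the continuity and temperature equations in Lagrangian form. The first equation of \eqref{Heat-Hydro-2} gives $D_t\rho=-\rho\,\nabla\cdot\bm u$, hence
\[ D_t\ln\rho=-\nabla\cdot\bm u, \]
while the third equation gives $D_tT=-T\,\nabla\cdot\bm u+\Delta T/\rho$, hence
\[ D_t\ln T=-\nabla\cdot\bm u+\frac{\Delta T}{\rho T}. \]
Subtracting these two identities, the compression terms $-\nabla\cdot\bm u$ cancel, leaving the clean pointwise relation
\[ D_tS=D_t\ln T-D_t\ln\rho=\frac{\Delta T}{\rho T}. \]

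Next, since $\Delta T$ has no definite sign, the entropy is not monotone pointwise, so I would pass to the total entropy $\int_{\mathbb R^3}\rho S\,dx$. Multiplying the last identity by $\rho$ and using the continuity equation to recombine the left-hand side into divergence form yields
\[ \partial_t(\rho S)+\nabla\cdot(\rho S\bm u)=\frac{\Delta T}{T}. \]
Integrating over $\mathbb R^3$, the convective flux $\nabla\cdot(\rho S\bm u)$ integrates to zero, and an integration by parts on the right gives
\[ \frac{d}{dt}\int_{\mathbb R^3}\rho S\,dx=\int_{\mathbb R^3}\frac{\Delta T}{T}\,dx=\int_{\mathbb R^3}\frac{|\nabla T|^2}{T^2}\,dx\geqslant 0, \]
which is precisely the assertion that the entropy of the system is nondecreasing, with the heat flux supplying the nonnegative production rate $\int|\nabla T|^2/T^2$.

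The routine algebra above is harmless; the step that needs genuine care is the justification of the two integrations by parts, i.e.\ the vanishing of the boundary contributions at spatial infinity. Here I would invoke the regularity and decay built into the a priori class $(a,\bm u,\theta)\in H^s(\mathbb R^3)$ with $s>5$: Sobolev embedding makes $a,\theta,\bm u$ and their relevant derivatives decay at infinity, so both the flux $\rho S\bm u$ and the term $\nabla T/T$ vanish fast enough for the divergence theorem to apply. I also need $\rho$ and $T$ to stay uniformly bounded below by a positive constant so that $S=\ln(T/\rho)$, $1/T$ and $1/\rho$ are well defined and smooth; this follows from the smallness of $a,\theta$ guaranteed by \eqref{energy small assumption} together with the $L^\infty$ control afforded by Sobolev embedding. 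With these two facts in hand the computation is rigorous.
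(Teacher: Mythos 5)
Your proof is correct and is essentially the paper's argument: both compute the evolution of the volumetric entropy $\rho S$, kill the convective flux by the divergence theorem, and conclude via $\int\frac{\Delta T}{T}\,dx=\int\frac{|\nabla T|^2}{T^2}\,dx\geqslant 0$. The only differences are organizational — you first isolate the pointwise identity $D_tS=\frac{\Delta T}{\rho T}$ before passing to conservation form, and you spell out the decay/positivity justifications that the paper leaves implicit — but the underlying computation is the same.
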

\begin{proof}
The entropy of unit volume is
$$\rho S=\rho\ln\left(\frac{T}{\rho}\right)=\rho\ln T-\rho\ln\rho,$$

and we can establish the evolution equation of $\rho S$:
$$\partial_t(\rho S)=\rho_tS+\rho S_t=-S\nabla\cdot(\rho\bm{u})+\nabla\cdot(\rho\bm{u})+\frac{\rho T_t}{T},$$

then the third equation of \eqref{Heat-Hydro-2} gives the result
$$
\begin{aligned}
\frac{d}{dt}\int\rho Sdx & =\int\rho\bm{u}\cdot\nabla S+\frac{\rho T_t}{T}dx \\
& =\int\frac{\rho\bm{u}\cdot\nabla T}{T}-\bm{u}\cdot\nabla\rho+\frac{\rho}{T}\left(\frac{\Delta T}{\rho}-\bm{u}\cdot\nabla T-T(\nabla\cdot\bm{u})\right)dx \\
& =\int\frac{\Delta T}{T}dx=\int\frac{|\nabla T|^2}{T^2}dx\geqslant 0.
\end{aligned}
$$

This completes the proof of Lemma \ref{entropy S lemma}.
\end{proof}

\section{Basic Energy Estimate}

By Lemma \ref{entropy S lemma}, we have
\begin{equation}\label{Entropy increase}
\frac{d}{dt}\int(1+a)\ln\left(\frac{1+a}{1+\theta}\right)dx+\int\frac{|\nabla\theta|^2}{(1+\theta)^2}dx=0.
\end{equation}

Making linear combination of \eqref{Entropy increase} and the conservation quantities, we obtain
\begin{equation}\label{Basic L2 estimate}
\frac{d}{dt}\int(1+a)\ln\left(\frac{1+a}{1+\theta}\right)+\left(\frac{|\bm{u}|^2}{2}+\frac{a|\bm{u}|^2}{2}-a+a\theta+\theta\right)dx+\int\frac{|\nabla\theta|^2}{(1+\theta)^2}dx=0.
\end{equation}

Making a Taylor expansion of \eqref{entropy S} with respect to $a$ and $\theta$, we have
$$\ln\left(\frac{1+a}{1+\theta}\right)=a-\theta-\frac{a^2}{2}+\frac{\theta^2}{2}+r(a,\theta),$$

where the remainder $r(a,\theta)$ satisfies
$$r(a,\theta)=O(a^3+\theta^3),\quad |a|+|\theta|\to 0.$$

Go back to \eqref{Basic L2 estimate}, we get
$$
\begin{aligned}
& \|a(t)\|_{L^2}^2+\|\bm{u}(t)\|_{L^2}^2+\|\theta(t)\|_{L^2}^2+2\int_0^t\|\nabla\theta(\tau)\|_{L^2}^2d\tau \\
= & ~ E_{0,1}(0)+\int a_0(\theta_0^2+|\bm{u}_0|^2-a_0^2)+(1+a_0)r(a_0,\theta_0)dx \\
- & ~ \int a(\theta^2+|\bm{u}|^2-a^2)+(1+a)r(a,\theta)dx+2\int_0^t\int\theta|\nabla\theta|^2\frac{(2+\theta)}{(1+\theta)^2}dxd\tau.
\end{aligned}
$$

By \eqref{energy small assumption}, we have
$$\|a\|_{L^\infty}+\|\theta\|_{L^\infty}\leqslant C\sqrt{E_{2,1}(t)}\leqslant C\varepsilon^{\frac{1}{2}},$$

this gives the result
\begin{equation}\label{Key Energy estimate 0}
E_{0,1}(t)\lesssim E_{0,1}(0)+E_{2,1}^{3/2}(0)+E_{2,1}^{3/2}(t),
\end{equation}

here and hereafter $A\lesssim B$ means $A\leqslant CB$ with a positive constant $C$.

\section{The Estimate of $E_k$}

Firstly, we write the equations of $a$ and $u$ in \eqref{Heat-Hydro-3} in the following form of symmetric hyperbolic systems.
\begin{equation}\label{Heat-Hydro-Hyperbolic system}
A_0(\bm{U},\theta)\bm{U}_t+\sum\limits_{j=1}^3A_j(\bm{U},\theta)\partial_j\bm{U}+\bm{F}=0,
\end{equation}

where
$$
\bm{U}=\left(
\begin{array}{c}
a \\
u_1 \\
u_2 \\
u_3
\end{array}
\right),\quad
A_0=\left(
\begin{array}{cccc}
\frac{1+\theta}{1+a} & 0 & 0 & 0  \\
0 & 1+a & 0 & 0  \\
0 & 0 & 1+a & 0  \\
0 & 0 & 0 & 1+a  \\
\end{array}
\right),
$$

and
$$
A_j=\left(
\begin{array}{cccc}
\frac{1+\theta}{1+a}u_j & (1+\theta)\delta_{1j} & (1+\theta)\delta_{2j} & (1+\theta)\delta_{3j}  \\
(1+\theta)\delta_{1j} & (1+a)u_j & 0 & 0 \\
(1+\theta)\delta_{2j} & 0 & (1+a)u_j & 0 \\
(1+\theta)\delta_{3j} & 0 & 0 & (1+a)u_j  \\
\end{array}
\right),\quad
\bm{F}=\left(
\begin{array}{c}
0 \\
(1+a)\theta_{x_1} \\
(1+a)\theta_{x_2} \\
(1+a)\theta_{x_3}
\end{array}
\right).
$$

By applying $\partial^\alpha$ to \eqref{Heat-Hydro-Hyperbolic system}, where the multi-index $\alpha$ satisfying $0<|\alpha|\leqslant k$, and the positive integer $k\leqslant s$, we obtain
$$A_0\partial^\alpha\bm{U}_t+\sum\limits_{j=1}^3A_j\partial_j\partial^\alpha\bm{U}=\left(A_0\partial^\alpha\bm{U}_t-\partial^\alpha(A_0\bm{U}_t)\right)+\sum\limits_{j=1}^3\big( A_j\partial_j\partial^\alpha\bm{U}-\partial^\alpha\left(A_j\partial_j\bm{U}\right)\big)-\partial^\alpha\bm{F}.$$

Then we take the $L^2$ inner product of the above equation with $\partial^\alpha\bm{U}$ and integrate with respect to $t$. By the symmetry of $A_j$ and $A_0$, we have the following energy estimate
\begin{equation}\label{Key Energy estimate k-1-1}
\begin{aligned}
& \int\partial^\alpha\bm{U}^TA_0\partial^\alpha\bm{U}dx \\
= & ~ 2\int_0^t\int\partial^\alpha\bm{U}^T\Big(\left(A_0\partial^\alpha\bm{U}_t-\partial^\alpha(A_0\bm{U}_t)\right)+\sum\limits_{j=1}^3\big(A_j\partial_j\partial^\alpha\bm{U}- \partial^\alpha\left(A_j\partial_j\bm{U}\right)\big)\Big)dxd\tau \\
+ & \int\partial^\alpha\bm{U}_0^TA_0(\bm{U}_0,\theta_0)\partial^\alpha\bm{U}_0dx+\int_0^t\int\partial^\alpha\bm{U}^T\left(\partial_tA_0+\sum\limits_{j=1}^3\partial_jA_j\right) \partial^\alpha\bm{U}dxd\tau \\
- & ~ 2\int_0^t\int \partial^\alpha\bm{F}\cdot\partial^\alpha\bm{U}dxd\tau,
\end{aligned}
\end{equation}

where
$$-2\int_0^t\int\partial^\alpha\bm{F}\cdot\partial^\alpha\bm{U}dxd\tau=-2\int_0^t\int\partial^\alpha\bm{u}\cdot\nabla\partial^\alpha\theta+\partial^\alpha\bm{u}\cdot\partial^\alpha (a\nabla\theta)dxd\tau.$$

On the other hand, we make energy estimate of $\theta$ to obtain
\begin{equation}\label{Key Energy estimate k-1-2}
\begin{aligned}
& \int|\partial^\alpha\theta|^2dx+2\int_0^t\int|\nabla\partial^\alpha\theta|^2dxd\tau \\
= & ~ \int|\partial^\alpha\theta_0|^2dx+2\int_0^t\int\partial^\alpha\bm{u}\cdot\nabla\partial^\alpha\theta dx+2\int_0^t\int\partial^\alpha(\theta\bm{u})\cdot\nabla\partial^\alpha \theta dxd\tau \\
+ & ~ 2\int_0^t\int\frac{\partial^\alpha\theta\nabla\partial^\alpha\theta\cdot\nabla a}{(1+a)^2}+\frac{a|\nabla\partial^\alpha\theta|^2}{1+a}+\partial^\alpha\theta\left( \partial^\alpha\left(\frac{\Delta\theta}{1+a}\right)-\frac{\Delta\partial^\alpha\theta}{1+a}\right)dxd\tau. \\
\end{aligned}
\end{equation}

Adding \eqref{Key Energy estimate k-1-1} to \eqref{Key Energy estimate k-1-2}, we get
$$
\begin{aligned}
& \int\partial^\alpha\bm{U}^TA_0\partial^\alpha\bm{U}dx+\int|\partial^\alpha\theta|^2dx+2\int_0^t\int|\nabla\partial^\alpha\theta|^2dxd\tau \\
= & ~ \int\partial^\alpha\bm{U}_0^TA_0(\bm{U}_0,\theta_0)\partial^\alpha\bm{U}_0dx+\int|\partial^\alpha\theta_0|^2dx \\
+ & ~ 2\int_0^t\int\partial^\alpha(\theta\bm{u})\cdot\nabla\partial^\alpha\theta-\partial^\alpha\bm{u}\cdot\partial^\alpha(a\nabla\theta)dxd\tau \\
+ & ~ 2\int_0^t\int\frac{\partial^\alpha\theta\nabla\partial^\alpha\theta\cdot\nabla a}{(1+a)^2}+\frac{a|\nabla\partial^\alpha\theta|^2}{1+a}+\partial^\alpha\theta\left( \partial^\alpha\left(\frac{\Delta\theta}{1+a}\right)-\frac{\Delta\partial^\alpha\theta}{1+a}\right)dxd\tau \\
+ & ~ 2\int_0^t\int\partial^\alpha\bm{U}^T\Big(\left(A_0\partial^\alpha\bm{U}_t-\partial^\alpha(A_0\bm{U}_t)\right)+\sum\limits_{j=1}^3\big(A_j\partial_j\partial^\alpha\bm{U} -\partial^\alpha\left(A_j\partial_j\bm{U}\right)\big)\Big)dxd\tau \\
+ & ~ \int_0^t\int\partial^\alpha\bm{U}^T\left(\partial_tA_0+\sum\limits_{j=1}^3\partial_jA_j\right)\partial^\alpha\bm{U}dxd\tau.
\end{aligned}
$$

We have the following lemma from \cite{Li-Zhou} to deal with the nonlinear terms.
\begin{lemma}\label{nonlinear lemma}
For $\forall ~ N\in\mathbb{N}_+$, we have
$$
\begin{aligned}
\|fg\|_{H^N} & \lesssim\bigg(\sum\limits_{|\alpha_1|\leqslant\lfloor\frac{N-1}{2}\rfloor}\|\partial^{\alpha_1}f\|_{L^\infty}\bigg)\bigg(\sum\limits_{|\alpha_3|\leqslant N}\| \partial^{\alpha_3}g\|_{L^2}\bigg) \\
& +\bigg(\sum\limits_{|\alpha_2|\leqslant\lfloor\frac{N-1}{2}\rfloor}\|\partial^{\alpha_2}g\|_{L^\infty}\bigg)\bigg(\sum\limits_{|\alpha_4|\leqslant N}\|\partial^{\alpha_4}f \|_{L^2}\bigg).
\end{aligned}
$$

For any multi-index $\beta$ satisfying $|\beta|=N>0$, we have
$$
\begin{aligned}
\big\|\partial^\beta(fg)-f\partial^\beta g\big\|_{L^2} & \lesssim\bigg(\sum\limits_{|\beta_1|\leqslant\lfloor\frac{N}{2}\rfloor}\|\partial^{\beta_1}f\|_{L^\infty}\bigg) \bigg(\sum\limits_{|\beta_3|\leqslant N-1}\|\partial^{\beta_3}g\|_{L^2}\bigg) \\
& +\bigg(\sum\limits_{|\beta_2|\leqslant\lfloor\frac{N-1}{2}\rfloor}\|\partial^{\beta_2}g\|_{L^\infty}\bigg)\bigg(\sum\limits_{|\beta_4|\leqslant N}\|\partial^{\beta_4}f \|_{L^2}\bigg).
\end{aligned}
$$
\end{lemma}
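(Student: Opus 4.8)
The plan is to treat both displayed inequalities as classical Moser--Kato--Ponce estimates and to prove them by combining the Leibniz rule with the (dimension-independent) Gagliardo--Nirenberg interpolation inequality, H\"older's inequality and Young's inequality. I would first reduce the two stated inequalities to their sharper \emph{homogeneous} forms
\[
\|fg\|_{H^N}\lesssim\|f\|_{L^\infty}\|g\|_{H^N}+\|g\|_{L^\infty}\|f\|_{H^N},
\]
\[
\big\|\partial^\beta(fg)-f\partial^\beta g\big\|_{L^2}\lesssim\|\nabla f\|_{L^\infty}\|g\|_{H^{N-1}}+\|g\|_{L^\infty}\|f\|_{H^N}.
\]
Each stated inequality follows from the corresponding homogeneous one because the low-order norms on the right ($\|f\|_{L^\infty}$, $\|g\|_{L^\infty}$, and $\|\nabla f\|_{L^\infty}$ when $N\ge2$) all appear among the summands indexed by $|\alpha_1|,|\beta_1|\le\lfloor\cdot\rfloor$, while the homogeneous Sobolev factors are dominated by the corresponding sums of $L^2$ norms; for $N=1$ the commutator is the single term $(\partial^\beta f)g$ and is bounded directly. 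The extra low-order $L^\infty$ summands in the statement are harmless and are retained only because they are the quantities that arise when the lemma is invoked.

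For the homogeneous product estimate I would expand $\partial^\alpha(fg)=\sum_{\gamma\le\alpha}\binom{\alpha}{\gamma}\partial^\gamma f\,\partial^{\alpha-\gamma}g$ and estimate each summand with $|\alpha|=N$ and $|\gamma|=j$. Splitting $\tfrac12=\tfrac{j}{2N}+\tfrac{N-j}{2N}$, H\"older gives
\[
\|\partial^\gamma f\,\partial^{\alpha-\gamma}g\|_{L^2}\le\|\partial^\gamma f\|_{L^{2N/j}}\,\|\partial^{\alpha-\gamma}g\|_{L^{2N/(N-j)}},
\]
and the Gagliardo--Nirenberg inequalities $\|\partial^\gamma f\|_{L^{2N/j}}\lesssim\|f\|_{L^\infty}^{1-j/N}\|D^Nf\|_{L^2}^{j/N}$ and $\|\partial^{\alpha-\gamma}g\|_{L^{2N/(N-j)}}\lesssim\|g\|_{L^\infty}^{j/N}\|D^Ng\|_{L^2}^{(N-j)/N}$ (valid for $0\le j\le N$) reduce the summand to $A^{1-t}B^{t}$ with $t=j/N$, $A=\|f\|_{L^\infty}\|D^Ng\|_{L^2}$ and $B=\|g\|_{L^\infty}\|D^Nf\|_{L^2}$. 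Young's inequality $A^{1-t}B^{t}\le(1-t)A+tB\le A+B$ then bounds every summand by $\|f\|_{L^\infty}\|D^Ng\|_{L^2}+\|g\|_{L^\infty}\|D^Nf\|_{L^2}$, which after summation gives the homogeneous product estimate.

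The commutator estimate I would deduce from the product estimate one level lower. Removing the $\gamma=0$ term yields $\partial^\beta(fg)-f\partial^\beta g=\sum_{0<\gamma\le\beta}\binom{\beta}{\gamma}\partial^\gamma f\,\partial^{\beta-\gamma}g$, and writing $\partial^\gamma f=\partial^{\gamma-e}(\partial^e f)$ for a unit multi-index $e\le\gamma$ exhibits each summand as a product of derivatives of $\nabla f$ and of $g$ of total order $(|\gamma|-1)+|\beta-\gamma|=N-1$. Applying the homogeneous product estimate at level $N-1$ to the pair $(\nabla f,\,g)$ therefore bounds the commutator by $\|\nabla f\|_{L^\infty}\|g\|_{H^{N-1}}+\|g\|_{L^\infty}\|\nabla f\|_{H^{N-1}}$, and $\|\nabla f\|_{H^{N-1}}\lesssim\|f\|_{H^N}$ turns this into the homogeneous commutator bound.

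The one genuinely delicate step is the balanced range $j\approx N/2$, where neither factor carries few enough derivatives to be placed in $L^\infty$ by a plain H\"older $L^\infty\times L^2$ split, so that the Gagliardo--Nirenberg interpolation followed by Young's inequality is unavoidable; here one must check that the interpolation exponents stay in the admissible range and that the floor thresholds $\lfloor(N-1)/2\rfloor$ and $\lfloor N/2\rfloor$ are respected. I expect this bookkeeping, rather than any analytic difficulty, to be the main obstacle, and I note that keeping the $L^\infty$ norms explicit throughout is exactly what makes the estimate independent of the space dimension, since no Sobolev embedding $H^2\hookrightarrow L^\infty$ is ever needed.
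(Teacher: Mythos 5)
Your proof is correct, but there is nothing in the paper to compare it against: the paper does not prove Lemma \ref{nonlinear lemma} at all, it quotes it from the reference \cite{Li-Zhou}. What you have written --- Leibniz expansion, H\"older with the split exponents $L^{2N/j}\times L^{2N/(N-j)}$, the endpoint Gagliardo--Nirenberg interpolation $\|D^jf\|_{L^{2N/j}}\lesssim\|f\|_{L^\infty}^{1-j/N}\|D^Nf\|_{L^2}^{j/N}$, and weighted AM--GM --- is the classical Moser calculus-inequality argument, i.e.\ essentially the proof found in the cited textbook, so you have reconstructed the outsourced proof rather than taken a different route. The reduction to the homogeneous forms is valid (all low-order $L^\infty$ factors occur in the stated sums, and you treat $N=1$ separately), and your identification of the balanced terms $|\gamma|\approx N/2$ as the only place where interpolation is unavoidable is exactly right; note it is needed only for even $N$, since for odd $N$ the pigeonhole $\min(|\gamma|,|\alpha-\gamma|)\le\lfloor N/2\rfloor=\lfloor(N-1)/2\rfloor$ already suffices. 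One simplification you missed: the commutator half never needs interpolation. In each term $\partial^\gamma f\,\partial^{\beta-\gamma}g$ with $1\le|\gamma|\le N$, either $|\gamma|\le\lfloor N/2\rfloor$ (put $\partial^\gamma f$ in $L^\infty$; then $|\beta-\gamma|\le N-1$), or $|\gamma|\ge\lfloor N/2\rfloor+1$, whence $|\beta-\gamma|\le N-\lfloor N/2\rfloor-1=\lfloor(N-1)/2\rfloor$ (put $\partial^{\beta-\gamma}g$ in $L^\infty$); the stated thresholds are tuned precisely to this pigeonhole, so plain H\"older gives the second inequality. Your detour through the product estimate for the pair $(\nabla f,g)$ --- which, as written, really invokes the term-wise bounds inside that proof, since the commutator sum is not of the form $\partial^{\beta'}(\nabla f\cdot g)$ --- is correct but unnecessary.

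One caveat you should flag. Your interpolation step requires the $L^2$ and $L^\infty$ norms to be taken over the same variables on which $\partial$ acts. In the paper, however, $\partial=(\partial_t,\partial_{x_1},\partial_{x_2},\partial_{x_3})$ while all norms are spatial at fixed time, and the lemma is invoked in that mixed setting. There the product half with threshold $\lfloor(N-1)/2\rfloor$ is actually false for even $N$: take $N=2$ and $f=g=\varepsilon\sin(t/\varepsilon)\phi(x)$ with $\phi$ Schwartz and nonzero; at $t=0$ one has $\partial_t^2(fg)=2\phi^2\neq0$, so $\|fg\|_{H^2}>0$, while $\|f\|_{L^\infty}=\|g\|_{L^\infty}=0$ and all the $L^2$ sums on the right are finite, so the right-hand side vanishes. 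Interpolation in $t$ is exactly what is missing there; the commutator half, which needs no interpolation, does survive in the mixed setting. This is a defect of the statement as transplanted into the paper's conventions (repairable by using the equations to trade time derivatives for spatial ones), not an error in your argument, but it means your proof establishes the lemma only in its natural reading, with derivatives and norms over the same $\mathbb{R}^n$.
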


Recall that $A_0$ is a positive definite matrix. By Lemma \ref{nonlinear lemma} and the Sobolev imbedding theorems, we obtain
$$
\begin{aligned}
& (1-C\varepsilon)\int|\partial^\alpha\bm{U}|^2dx+\int|\partial^\alpha\theta|^2dx+2\int_0^t\int|\nabla\partial^\alpha\theta|^2dxd\tau \\
\lesssim & ~ E_{k,1}(0)+E_{\lfloor k/2+5/2\rfloor,1}^{1/2}(t)\big(E_{k,1}(t)+E_{k,2}(t)\big).
\end{aligned}
$$

Thus we get
\begin{equation}\label{Key Energy estimate k-1}
E_{k,1}(t)\lesssim E_{k,1}(0)+E_{\lfloor k/2+5/2\rfloor,1}^{1/2}(t)\big(E_{k,1}(t)+E_{k,2}(t)\big).
\end{equation}

To estimate $E_{k,2}(t)$, we set
$$B_i(\bm{U},\theta)=A_i(\bm{U},\theta)-A_i(\bm{0},0),\quad 0\leqslant i\leqslant 3,$$

then we can rewrite \eqref{Heat-Hydro-Hyperbolic system}, and apply $\partial^\beta ~ (|\beta|\leqslant k-1)$ to get
\begin{equation}\label{Heat-Hydro-Hyperbolic system-2}
\begin{aligned}
& \partial^\beta\bm{U}_t+\sum\limits_{j=1}^3A_j(\bm{0},0)\partial^\beta\partial_j\bm{U} \\
= & \left(
\begin{array}{c}
\partial^\beta a_t+\partial^\beta(\nabla\cdot\bm{u}) \\
\partial^\beta\bm{u}_t+\nabla\partial^\beta a
\end{array}
\right)
=-\partial^\beta\left(B_0\bm{U}_t+\sum\limits_{j=1}^3B_j\partial_j\bm{U}+\bm{F}\right).
\end{aligned}
\end{equation}

Taking inner product of \eqref{Heat-Hydro-Hyperbolic system-2} with the following vector
$$\partial^\beta\bm{V}\triangleq\big(-\partial^\beta(\nabla\cdot\bm{u}),\nabla\partial^\beta a\big),$$

and integrate with respect to $t$, we get
\begin{equation}\label{Key Energy estimate k-2-1}
\begin{aligned}
& \int\partial^\beta\bm{u}\cdot\nabla\partial^\beta adx+\int_0^t\int|\nabla\partial^\beta a|^2-|\partial^\beta(\nabla\cdot\bm{u})|^2+\nabla\partial^\beta a\cdot\nabla\partial^\beta \theta dxd\tau \\
= & ~ \int\partial^\beta\bm{u}_0\cdot\nabla\partial^\beta a_0dx-\int_0^t\int\nabla\partial^\beta a\cdot\partial^\beta(a\nabla\theta)dxd\tau \\
- & ~ \int_0^t\int\partial^\beta\bm{V}\cdot\partial^\beta\left(B_0\bm{U}_t+\sum\limits_{j=1}^3B_j\partial_j\bm{U}\right)dxd\tau.
\end{aligned}
\end{equation}

Taking inner product of \eqref{Heat-Hydro-Hyperbolic system-2} with the following vector
$$\partial^\beta\bm{W}\triangleq\big(\bm{0},-\nabla\partial^\beta\theta\big),$$

we have
\begin{equation}\label{Key Energy estimate k-2-2-1}
\begin{aligned}
- & ~ \int\partial^\beta\bm{u}_t\cdot\nabla\partial^\beta\theta dx-\int\partial^\beta a\cdot\nabla\partial^\beta\theta+|\nabla\partial^\beta\theta|^2dx \\
= & ~ \int\nabla\partial^\beta\theta\cdot\partial^\beta\left(\bm{u}\cdot\nabla\bm{u}+\frac{\theta-a}{1+a}\nabla a\right)dx.
\end{aligned}
\end{equation}

Then we take inner product of the equation of $\partial^\beta\theta$, which is
$$\partial^\beta\theta_t+\partial^\beta(\nabla\cdot\bm{u})-\Delta\partial^\beta\theta=-\partial^\beta\left(\frac{a\Delta\theta}{1+a}+\nabla\cdot(\theta\bm{u})\right),$$

with $\partial^\beta(\nabla\cdot\bm{u})$ to obtain
\begin{equation}\label{Key Energy estimate k-2-2-2}
\begin{aligned}
- & ~ \int\partial^\beta\bm{u}\cdot\nabla\partial^\beta\theta_tdx+\int|\partial^\beta(\nabla\cdot\bm{u})|^2dx-\int\partial^\beta(\nabla\cdot\bm{u})\Delta\partial^\beta\theta dx \\
= & ~ -\int\partial^\beta(\nabla\cdot\bm{u})\partial^\beta\left(\frac{a\Delta\theta}{1+a}+\big(\nabla\cdot(\theta\bm{u})\big)\right)dx.
\end{aligned}
\end{equation}

Adding \eqref{Key Energy estimate k-2-2-1} to \eqref{Key Energy estimate k-2-2-2} and integrating with respect to $t$, we get
\begin{equation}\label{Key Energy estimate k-2-2}
\begin{aligned}
& -\int\partial^\beta\bm{u}\cdot\nabla\partial^\beta\theta dx+\int_0^t\int|\partial^\beta(\nabla\cdot\bm{u})|^2-|\nabla\partial^\beta\theta|^2dxd\tau \\
- & ~ \int_0^t\int\nabla\partial^\beta a\cdot\nabla\partial^\beta\theta dxd\tau-\int_0^t\int\partial^\beta(\nabla\cdot\bm{u})\Delta\partial^\beta\theta dxd\tau \\
= & ~ -\int\partial^\beta\bm{u}_0\cdot\nabla\partial^\beta\theta_0dx+\int_0^t\int\nabla\partial^\beta\theta\cdot\partial^\beta\left(\bm{u}\cdot\nabla\bm{u}+\frac{\theta-a}{1+a} \nabla a\right)dxd\tau \\
- & ~ \int_0^t\int\partial^\beta(\nabla\cdot\bm{u})\partial^\beta\left(\frac{a\Delta\theta}{1+a}+\nabla\cdot(\theta\bm{u})\right)dxd\tau.
\end{aligned}
\end{equation}

Adding \eqref{Key Energy estimate k-2-1} to \eqref{Key Energy estimate k-2-2}, we have
\begin{equation}\label{Key Energy estimate k-2-3}
\begin{aligned}
& \int\partial^\beta\bm{u}\cdot\nabla\partial^\beta(a-\theta)dx+\int_0^t\int|\nabla\partial^\beta a|^2-|\nabla\partial^\beta\theta|^2-\partial^\beta(\nabla\cdot\bm{u})\Delta \partial^\beta\theta dxd\tau \\
= & ~ \int\partial^\beta\bm{u}_0\cdot\nabla\partial^\beta(a_0-\theta_0)dx-\int_0^t\int\nabla\partial^\beta a\cdot\partial^\beta(a\nabla\theta)dxd\tau \\
- & ~ \int_0^t\int\partial^\beta\bm{V}\cdot\partial^\beta\left(B_0\bm{U}_t+\sum\limits_{j=1}^3B_j\partial_j\bm{U}\right)dxd\tau \\
+ & ~ \int_0^t\int\nabla\partial^\beta\theta\cdot\partial^\beta\left(\bm{u}\cdot\nabla\bm{u}+\frac{\theta-a}{1+a}\nabla a\right)dxd\tau \\
- & ~ \int_0^t\int\partial^\beta(\nabla\cdot\bm{u})\partial^\beta\left(\frac{a\Delta\theta}{1+a}+\nabla\cdot(\theta\bm{u})\right)dxd\tau.
\end{aligned}
\end{equation}

Thus we have
\begin{equation}\label{Key Energy estimate k-2-4}
\begin{aligned}
\int_0^t\int|\nabla\partial^\beta a|^2dxd\tau\leqslant & ~ \int_0^t\int|\nabla\partial^\beta\theta|^2+\frac{1}{2}|\partial^\beta(\nabla\cdot\bm{u})|^2+\frac{1}{2}|\Delta \partial^\beta\theta|^2dxd\tau \\
+ & ~ \int\partial^\beta\bm{u}_0\cdot\nabla\partial^\beta(a_0-\theta_0)dx-\int\partial^\beta\bm{u}\cdot\nabla\partial^\beta(a-\theta)dx \\
- & ~ \int_0^t\int\partial^\beta\bm{V}\cdot\partial^\beta\left(B_0\bm{U}_t+\sum\limits_{j=1}^3B_j\partial_j\bm{U}\right)dxd\tau \\
- & ~ \int_0^t\int\partial^\beta(\nabla\cdot\bm{u})\partial^\beta\left(\frac{a\Delta\theta}{1+a}+\nabla\cdot(\theta\bm{u})\right)dxd\tau \\
+ & ~ \int_0^t\int\nabla\partial^\beta\theta\cdot\partial^\beta\left(\bm{u}\cdot\nabla\bm{u}+\frac{\theta-a}{1+a}\nabla a\right)dxd\tau \\
- & ~ \int_0^t\int\nabla\partial^\beta a\cdot\partial^\beta(a\nabla\theta)dxd\tau.
\end{aligned}
\end{equation}

Now go back to \eqref{Key Energy estimate k-2-1}, we have
\begin{equation}\label{Key Energy estimate k-2-5}
\begin{aligned}
\frac{1}{2}\int_0^t\int|\partial^\beta(\nabla\cdot\bm{u})|^2dxd\tau\leqslant & ~ \int_0^t\int\frac{3}{4}|\nabla\partial^\beta a|^2+\frac{1}{4}|\nabla\partial^\beta\theta|^2dxd\tau \\
+ & ~ \frac{1}{2}\int_0^t\int\partial^\beta\bm{V}\cdot\partial^\beta\left(B_0\bm{U}_t+\sum\limits_{j=1}^3B_j\partial_j\bm{U}\right)dxd\tau \\
+ & ~ \frac{1}{2}\int\partial^\beta\bm{u}\cdot\nabla\partial^\beta adx-\int\partial^\beta\bm{u}_0\cdot\nabla\partial^\beta a_0dx \\
+ & ~ \frac{1}{2}\int_0^t\int\nabla\partial^\beta a\cdot\partial^\beta(a\nabla\theta)dxd\tau.
\end{aligned}
\end{equation}

Substituting \eqref{Key Energy estimate k-2-5} into \eqref{Key Energy estimate k-2-4}, by \eqref{Key Energy estimate k-1} we have
\begin{equation}\label{Key Energy estimate k-2}
\begin{aligned}
& \sum\limits_{|\beta|\leqslant k-1}\int_0^t\int|\nabla\partial^\beta a|^2dxd\tau \\
\lesssim & ~ E_{k,1}(t)+E_{k,1}(0)+E_{\lfloor k/2+5/2\rfloor,1}^{1/2}(t)\big(E_{k,1}(t)+E_{k,2}(t) \big) \\
\lesssim & ~ E_{k,1}(0)+E_{\lfloor k/2+5/2\rfloor,1}^{1/2}(t)\big(E_{k,1}(t)+E_{k,2}(t)\big).
\end{aligned}
\end{equation}

Substituting \eqref{Key Energy estimate k-2} into \eqref{Key Energy estimate k-2-5}, similarly by \eqref{Key Energy estimate k-1} we have
\begin{equation}\label{Key Energy estimate k-3}
\begin{aligned}
& \sum\limits_{|\beta|\leqslant k-1}\int_0^t\int|\partial^\beta(\nabla\cdot \bm{u})|^2dxd\tau \\
\lesssim & ~ E_{k,1}(t)+E_{k,1}(0)+E_{\lfloor k/2+5/2\rfloor,1}^{1/2}(t)\big(E_{k,1}(t)+E_{k,2}(t)\big) \\
\lesssim & ~ E_{k,1}(0)+E_{\lfloor k/2+5/2\rfloor,1}^{1/2}(t)\big(E_{k,1}(t)+E_{k,2}(t)\big).
\end{aligned}
\end{equation}

Note that by \eqref{irrotational} and Hodge decomposition, we have
$$\sum\limits_{|\beta|\leqslant k-1}\int_0^t\int|\partial^\beta(\nabla\cdot\bm{u})|^2dxd\tau=\sum\limits_{|\beta|\leqslant k-1}\int_0^t\int|\partial^\beta(\nabla\bm{u})|^2dxd\tau.$$

Adding \eqref{Key Energy estimate k-1}, \eqref{Key Energy estimate k-2} and \eqref{Key Energy estimate k-3}, one has
\begin{equation}\label{Key Energy estimate k}
E_{k,1}(t)+E_{k,2}(t)\lesssim E_{k,1}(0)+E_{\lfloor k/2+5/2\rfloor,1}^{1/2}(t)\big(E_{k,1}(t)+E_{k,2}(t)\big),\quad 1\leqslant k\leqslant s.
\end{equation}

Note that $s>5$, by \eqref{Key Energy estimate 0} and \eqref{Key Energy estimate k} we arrive at
\begin{equation}\label{Energy estimate s}
E_s(t)\triangleq E_{s,1}(t)+E_{s,2}(t)\lesssim E_s(0)+E^{3/2}_s(0)+E^{3/2}_s(t)\leqslant C\big(M^2\varepsilon^2+M^3\varepsilon^3+\varepsilon^{\frac{3}{2}}\big).
\end{equation}

Now we give the proof of Thm\ref{main}.
\begin{proof}
Assume that $t_*<\infty$ in \eqref{energy small assumption}. We take $\varepsilon>0$ small enough, then \eqref{Energy estimate s} gives
$$E_s(t_*)\leqslant 2C\varepsilon^{\frac{3}{2}}<\varepsilon,$$

this contradicts our assumption \eqref{energy small assumption}. Thus we have
$$E_s(t)\leqslant\varepsilon, \quad \forall ~ t\geqslant 0,$$

which completes the proof of Thm \ref{main}.
\end{proof}

\par{\bf Acknowledgements.}
Yi Zhou was supported by Key Laboratory of Mathematics for Nonlinear Sciences, Ministry of Education of China. Shanghai Key Laboratory for Contemporary Applied Mathematics, School of Mathematical Sciences, Fudan University, P.R. China, NSFC (grants No. 11421061). And we sincerely thank Dr. Yi Zhu for her kind help.

\end{document}